\newtheorem{theorem}[subsection]{Theorem}
\newtheorem{lemma}[subsection]{Lemma}
\theoremstyle{definition}
\newtheorem{remark}[subsection]{Remark}
\newtheorem{example}[subsection]{Example}
\newcommand{\dist}{\mathrm{dist}}
\newcommand{\haus}{\mathcal{H}}
\newcommand{\spt}{\mathrm{spt}}
\newcommand{\reg}{\mathrm{reg}}
\newcommand{\eps}{\epsilon}
\newcommand{\sing}{\mathrm{sing}}
\newcommand{\R}{\mathbb{R}}
\newcommand{\del}{\partial}
\newcommand{\dom}{\mathrm{dom}}
\newcommand{\cI}{\mathcal{I}}
\title{A global bound for the singular set of area-minimizing hypersurfaces}
\author{Nick Edelen}
\address {Massachusetts Institute of Technology, 77 Massachusetts Avenue, Cambridge, MA 02139, USA}
\email{edelen@mit.edu}
\begin{document}

\begin{abstract}
We give an a priori bound on the $(n-7)$-dimensional measure of the singular set for an area-minimizing $n$-dimensional hypersurface, in terms of the geometry of its boundary.
\end{abstract}

\maketitle


Area-minimizing surfaces in general will not be smooth, and a basic question in minimal surface theory is to understand the size and nature of the singular region.  The cumulative works of many (Federer, De Giorgi, Allard, Simons, to name only a few) prove that for absolutely-area-minimizing $n$-dimensional hypersurfaces in $\R^{n+1}$ (``codimension-one area-minimizing integral currents''), the interior singular set is at most $(n-7)$-dimensional.  This dimension bound is sharp, and is directly tied to the existence of low-dimensional, non-flat minimizing cones.

\cite{hardt-simon:boundary} proved that for such codimension-one area-minimizers, if the boundary is known to be $C^{1,\alpha}$ and multiplicity-one, then in fact no singularities lie within a neighborhood of the boundary.  Combined with interior regularity, this theorem gives a very nice structure of these minimizing hypersurfaces.

Recently \cite{naber-valtorta}, \cite{naber-valtorta:varifold} quantified the interior partial regularity, by demonstrating effective local (interior) bounds on the $\haus^{n-7}$ measure of the singular set.  Their methods also prove $(n-7)$-rectifiability of the singular set, which was originally established through an entirely different approach by \cite{simon:rect}.

In this short note, we obtain obtain a global, \emph{effective} a priori estimate on the singular set of an area-minimzing hypersurface in terms of the boundary geometry.  Our results are loosely analogous to the a priori bounds of \cite{almgren-lieb} (see also the recent works \cite{misk:3d-sing}, \cite{misk:nd-sing}).

\vspace{5mm}

We work in $\R^{n+1}$, for $n \geq 7$.  Let us write $\cI_n(U)$ for the space of integral $n$-currents acting on forms supported in the open set $U$.  Given an $n$-dimensional, oriented manifold $E$, write $[E]$ for the current induced by integration.  Let $\eta_\lambda(x) = \lambda x$, and $\tau_y(x) = x + y$.

If $T \in \cI_n(U)$, we say $T$ is area-minimizing if $||T||(W) \leq ||T+S||(W)$ for every open $W \subset\subset U$, and every $S \in \cI_n(U)$ satisfying $\del S = 0$, $\spt S \subset W$.  The regular set $\reg T$ is the (open) set of points where $\spt T$ is locally the union of embedded $C^{1,\alpha}$ manifolds.  The singular set is $\sing T = \spt T \setminus \reg T$.  Write $||T||$ for the mass measure of $T$.

Given an $k$-manifold $S$, and $x \in S$, let $r_{1,\alpha}(S, x)$ be the largest radius $r$, so that $(S - x)/r$ is the graph of a $C^{1,\alpha}$ function $u$, with $|u|_{1,\alpha} \leq 1$.  Define $r_{1,\alpha}(S) = \inf_{x \in S} r_{1,\alpha}(S, x)$.

Our main Theorem is the following.
\begin{theorem}\label{thm:main}
There is a constant $c = c(n, \alpha)$ so that the following holds.  Let $T$ be a area-minimizing integral $n$-current in $\R^{n+1}$.  Suppose $\del T$ is a multiplicity-one, compact, oriented $C^{1,\alpha}$ manifold $S$, and assume that $S$ is contained in the boundary of some convex set.  Then
\begin{gather}
\haus^{n-7}(\sing T) \leq c(n, \alpha) \frac{ ||T||(\R^{n+1}) }{r_{1,\alpha}(S)^7}
\end{gather}
In particular, we have
\[
\haus^{n-7}(\sing T) \leq c'(n,\alpha)\frac{\haus^{n-1}(S)^{\frac{n}{n-1}}}{r_{1,\alpha}(S)^7}.
\]
\end{theorem}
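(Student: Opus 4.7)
The plan is to combine two ingredients: (a) an \emph{effective} version of the Hardt--Simon boundary regularity theorem \cite{hardt-simon:boundary}, asserting that $\sing T$ is bounded away from $S$ at a scale comparable to $r_{1,\alpha}(S)$; and (b) the Naber--Valtorta \cite{naber-valtorta} effective interior $\haus^{n-7}$ estimate.  For (a), I would establish that there exists $\rho = \rho(n,\alpha) > 0$ with
\[
\sing T \subset \{x : \dist(x, S) \geq \rho \, r_{1,\alpha}(S)\}.
\]
Since the hypotheses are scale-invariant under $\eta_{r_{1,\alpha}(S)^{-1}}$, one may normalize $r_{1,\alpha}(S) = 1$ and argue by contradiction: given a sequence $(T_i, S_i)$ satisfying the hypotheses with singular points $x_i \in \sing T_i$ tending to $S_i$, translate so $x_i \to 0$, extract subsequential limits $T_i \to T_\infty$ and $S_i \to S_\infty$, verify that $T_\infty$ is area-minimizing with multiplicity-one $C^{1,\alpha}$ boundary $S_\infty$ of regularity radius $\geq 1$, and that $0 \in S_\infty \cap \sing T_\infty$ -- contradicting the qualitative Hardt--Simon theorem.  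The convexity hypothesis enters here by providing the containment $\spt T_i \subset K_i$, which supplies the local mass bounds near $S_i$ needed to extract a nontrivial minimizer in the limit.

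\textbf{Covering and isoperimetric corollary.} Set $r_0 := r_{1,\alpha}(S)$ and $r := \rho r_0$.  Using the Vitali covering lemma on $\{B_{r/20}(x) : x \in \sing T\}$, select a disjoint subcollection $\{B_{r/20}(x_i)\}$ whose fivefold enlargements $\{B_{r/4}(x_i)\}$ still cover $\sing T$.  By the effective boundary regularity of (a), each ball $B_{r/2}(x_i)$ is disjoint from $\spt \del T$, so $T$ is an interior area-minimizer there and the Naber--Valtorta estimate gives
\[
\haus^{n-7}(\sing T \cap B_{r/4}(x_i)) \leq C(n) \, \frac{||T||(B_{r/2}(x_i))}{r^{7}}.
\]
The disjointness of $\{B_{r/20}(x_i)\}$ forces $\{B_{r/2}(x_i)\}$ to have multiplicity overlap bounded by $N(n)$, so summing and using $r = \rho r_0$ gives the first inequality.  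For the ``In particular'' statement, I would invoke the Federer--Fleming isoperimetric inequality for integral currents to produce an integral filling $T'$ of $S$ with $||T'||(\R^{n+1}) \leq c(n) \haus^{n-1}(S)^{n/(n-1)}$; the area-minimality of $T$ then yields $||T||(\R^{n+1}) \leq ||T'||(\R^{n+1})$, and substitution completes the proof.

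\textbf{Main obstacle.} The principal technical work is concentrated in (a): upgrading the purely qualitative Hardt--Simon boundary regularity to a scale-invariant statement.  The compactness argument has two delicate features -- one must show that the limit boundary $S_\infty$ inherits a uniform multiplicity-one $C^{1,\alpha}$ structure so that the qualitative Hardt--Simon theorem actually applies to the limit, and one must rule out the possibility that the limit current $T_\infty$ degenerates (e.g.\ vanishes) near the limiting singular point $x_\infty$.  Both points rely crucially on the convex-hull containment furnished by the convexity hypothesis, which is what makes the convexity assumption genuinely necessary for the proof rather than a cosmetic simplification.
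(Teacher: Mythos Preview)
Your high-level strategy is the same as the paper's: an effective Hardt--Simon statement to clear a neighborhood of $S$, then Naber--Valtorta on interior balls, a covering argument, and the isoperimetric inequality. But two steps in your sketch are underspecified in a way that hides the main technical device.

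\textbf{The compactness step in (a).} You assert that convexity ``supplies the local mass bounds near $S_i$'' needed to extract a limit. It does not: the half-space containment $\spt T_i \subset \{x_1\le 0\}$ says nothing about $||T_i||(B_r)$ inside the half-space, and there is no a priori uniform bound on the mass of $T_i$ near a boundary point even under the hypotheses of the theorem. The paper gets around this by a codimension-one decomposition: after subtracting a cap $R_i$ with $\partial R_i = [S_i]$ in $B_{1/2}$ and $||R_i||(B_r)\le c(n)r^n$, one writes $T_i - R_i = \sum_j \partial[E_{i,j}]$ with nested $E_{i,j}$. The half-space containment then forces all but one piece to lie in $\overline\Omega_i$, so each piece (and the exceptional one plus $R_i$) is separately minimizing; comparison against balls gives $||\partial[E_{i,j}]||(B_r)\le c(n)r^n$ for every piece. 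The singular point $x_i$ lies in the support of \emph{one} piece, and it is that piece (not $T_i$) on which compactness is run. The paper then dilates by $|x_i|^{-1}$, so that in the limit the boundary is a line $L$, the support lies in a half-space, and the piece has Euclidean growth; the tangent cone at infinity is planar by Hardt--Simon and is forced by the half-space containment to be a multiplicity-one half-plane, whence monotonicity gives the contradiction. Your direct-limit idea (no dilation, apply qualitative Hardt--Simon to $T_\infty$) can be made to work \emph{after} the decomposition, but not before.

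\textbf{The Naber--Valtorta step in (b).} You invoke the bound
\[
\haus^{n-7}(\sing T \cap B_{r/4}(x_i)) \leq C(n)\,\frac{||T||(B_{r/2}(x_i))}{r^{7}},
\]
but Naber--Valtorta gives $\haus^{n-7}(\sing T\cap B_{1/2})\le c(n,\Lambda)$ with $\Lambda = ||T||(B_1)$, and the dependence on $\Lambda$ is not linear. The paper upgrades this to the linear bound by the same trick: decompose $T=\sum_i \partial[E_i]$ in $B_1$, observe $||\partial[E_i]||(B_{3/4})\le c(n)$ by comparison, apply Naber--Valtorta with $\Lambda=c(n)$ to each piece, and sum using $||T||=\sum_i||\partial[E_i]||$.

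So the single missing ingredient in your proposal is the codimension-one decomposition into boundaries of nested sets; it is what makes both (a) and (b) go through.
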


I believe Theorem \ref{thm:main} should hold for more general $S$, but there are subtleties even in the idealized case when $S$ is a line.  See the discussion below.


We also have a version of Theorem \ref{thm:main} in the case when $T$ has free-boundary.  Given open sets $U$, $\Omega$, we say $T \in \cI_n(U)$ is area-minimizing with free-boundary in $\Omega$ if: $\spt T \subset \overline{\Omega}$, and $||T||(W) \leq ||S + T||(W)$ for all $W \subset\subset U$, and every $S \in \cI_n(U)$ satisfying $\spt S \subset \overline{\Omega} \cap W$ and $\spt(\del S) \subset \del \Omega$.  \cite{gruter:optimal-reg} proved boundary singularities have dimension at most $n-7$.
\begin{theorem}\label{thm:fb}
Let $\Omega$ be a domain with $C^{2}$-boundary, and $\infty > r_{1,1}(\del \Omega) > 0$.  Let $T$ be a compactly supported, area-minimizing current with free-boundary in $\Omega$, with $\del T \llcorner \Omega = 0$.  Then
\[
\haus^{n-7}(\sing T) \leq c(n) \frac{||T||(\Omega)}{r_{1,1}(\del\Omega)^7}.
\]
\end{theorem}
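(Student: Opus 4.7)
The plan is to reduce Theorem \ref{thm:fb} to the interior quantitative partial regularity of \cite{naber-valtorta:varifold} via a reflection across $\del\Omega$. Set $r_0 := c(n)\, r_{1,1}(\del\Omega)$, split $\sing T = (\sing T \cap \Omega) \sqcup (\sing T \cap \del\Omega)$, and control each piece at scale $r_0$ through a Vitali cover.

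For the interior piece, at any $p \in \sing T$ with $\dist(p,\del\Omega) \geq 2r_0$, the current $T$ is genuinely area-minimizing on $B_{2r_0}(p)$, and the interior Naber--Valtorta estimate directly gives $\haus^{n-7}(\sing T \cap B_{r_0}(p)) \leq c(n)\, r_0^{-7}\, ||T||(B_{2r_0}(p))$, which Vitali-sums to the desired $\lesssim r_0^{-7}\, ||T||(\Omega)$. For the boundary piece, at each $p \in \sing T$ within $2r_0$ of $\del\Omega$ take the nearest $q \in \del\Omega$ and use the $r_{1,1}$-assumption (via, say, the normal exponential map of $\del\Omega$) to build a $C^{1,1}$-diffeomorphism $\Phi \colon B_{r_0}(q) \to U$ sending $\del\Omega$ to a hyperplane $P$ and $\Omega$ to $\{x_{n+1} > 0\}$, with $C^{1,1}$-norms of $\Phi^{\pm 1}$ bounded by $r_0^{-1}$. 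Push $T$ forward by $\Phi$ and then reflect across $P$ to produce a doubled current $\tilde T$. The hypotheses $\spt T \subset \overline\Omega$, $\del T \llcorner \Omega = 0$, and the free-boundary condition together ensure that $\tilde T$ has no boundary inside the ball and is area-minimizing with respect to the reflected pullback metric $\tilde g$, which is globally Lipschitz with $C^{0,1}$-norm controlled by $r_0^{-1}$. An appropriate version of the Naber--Valtorta estimate for $\tilde T$ yields
\[
\haus^{n-7}(\sing \tilde T \cap B_{r_0/2}(\Phi(q))) \leq c(n)\, r_0^{-7}\, ||\tilde T||(B_{r_0}(\Phi(q))),
\]
which pulls back to the same bound on $\sing T \cap B_{cr_0}(q)$. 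A Vitali cover of the remaining boundary singularities finishes the argument.

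The main obstacle I foresee is running the quantitative stratification for $\tilde T$: the doubled metric $\tilde g$ is only Lipschitz, below the regularity in which \cite{naber-valtorta:varifold} is usually stated. Two routes seem available. The first is to verify directly that the main ingredients — monotonicity, $\epsilon$-regularity, and the Reifenberg-type covering — all survive when the ambient metric is Lipschitz (the $C^2$ hypothesis on $\del\Omega$ gives $L^\infty$ control of the second fundamental form, which should suffice). The second is to regard $\tilde T$ as an almost-minimizer in the Euclidean metric, with deviation modulus $\lesssim s/r_0$ inherited from the $C^{1,1}$-regularity of $\Phi$, and invoke a perturbative almost-minimizer version of the singular-set estimate. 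The remaining steps — the uniform mass bound $||\tilde T||(B_{r_0}(\Phi(q))) \leq 2\, ||T||(\Omega)$ and a standard Vitali or Besicovitch covering to pass from local to global — should be routine.
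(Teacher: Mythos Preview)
Your overall strategy---reflect across $\del\Omega$, apply a Naber--Valtorta-type bound to the doubled current, then Vitali-cover---matches the paper's. The execution differs in two places worth noting.

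First, the paper does \emph{not} straighten $\del\Omega$ before reflecting. It reflects directly via the nearest-point projection, $\sigma(x) = 2\xi(x) - x$, sets $T' = T - \sigma_\sharp T$, and then runs the entire Naber--Valtorta machinery using the Gr\"uter--Jost free-boundary density
\[
\tilde\theta_T(x,r) = r^{-n}||T||(B_r(x)) + r^{-n}||T||(\{y : |\sigma(y)-x|<r\})
\]
and its associated monotonicity formula, together with Gr\"uter's almost-minimizing property of $T'$ (an inequality $||T'||(B_r) \leq ||T'+S||(B_r) + c\Gamma r\,||T'||(B_r)$ for $\Gamma = r_{1,1}(\del\Omega)^{-1}$). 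This sidesteps precisely the Lipschitz-metric obstacle you flagged: rather than working in a rough ambient metric, the paper adapts the two NV ingredients (the quantitative symmetry dichotomy and the $L^2$ best-plane estimate) to $\tilde\theta_T$ directly. Your second proposed route---treat the doubled current as Euclidean-almost-minimizing with deviation $\lesssim s/r_0$---is essentially Gr\"uter's inequality in different clothing, so it should also go through; the paper's framing just plugs into existing literature more cleanly.

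Second, to obtain the \emph{linear} dependence on $||T||$ (rather than a bound $c(n,\Lambda)$ with $\Lambda$ the local mass), the paper decomposes the reflected current $T' = \sum_i \del[E_i]$ into boundaries of nested sets, each of which is free-boundary minimizing with mass $\leq c(n)r^n$ by comparison. Naber--Valtorta is applied to each piece separately and the contributions are summed. Your proposal asserts the linear bound $\haus^{n-7}(\sing\tilde T \cap B_{r_0/2}) \leq c(n)\,r_0^{-7}\,||\tilde T||(B_{r_0})$ directly, but this is not what NV gives without the decomposition step; you would need to insert the same argument as the paper's Lemma~\ref{lem:local-bound} on each boundary ball.
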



The key to proving both Theorems is the observation that Naber-Valtorta's technique gives the following \emph{linear} interior bound on the singular set: if $T$ is area-minimizing in $U \subset \R^{n+1}$, with $\del T \llcorner U = 0$, then for every $\eps > 0$, we have:
\[
\haus^{n-7}(\sing T \cap U \setminus B_\eps(\del U)) \leq c(n) \eps^{-7} ||T||(U \setminus B_{\eps/2}(\del U)) .
\]  

For the Neumann problem (Theorem \ref{thm:fb}), we can adapt the techniques of \cite{naber-valtorta:varifold} to prove a priori estimates on the singular set in a neighborhood of the barrier.  Unfortunately, it's not clear that a good Dirichlet boundary version of Naber-Valtorta exists, in any more generality than is considered in Theorem \ref{thm:main}.  The problem is that there is not necessarily a good relationship between regularity and symmetry.  If there exists a singular, minimizing hypersurface with Euclidean area growth and linear boundary, then by \cite{hardt-simon:boundary} any blow-down sequence would preclude an inclusion like $\sing T \subset S^{n-7}_\eps$ (here $S^{n-7}_\eps$ being the $(n-7, \eps)$-strata of \cite{cheeger-naber:ricci}).

Instead, for Theorem \ref{thm:main}, we can prove an effective version of \cite{hardt-simon:boundary}, which says that the singular set is some uniform distance away from the boundary curve.  It's tempting to think an \emph{ineffective}, quantitative version of \cite{hardt-simon:boundary} might hold for more general Dirichlet setups, but the problem is the same as above.


\begin{remark}
The following variant of Theorem \ref{thm:main} holds for almost-area-minimizers.  Let $T \in \cI_n(\R^{n+1})$ be almost-area-minimizing, in the sense that
\[
||T||(B_r(x)) \leq ||T + S||(B_r(x)) + c_0 r^{n+2\alpha},
\]
for any $S \in \cI_n(\R^{n+1})$, $\del S = 0$, $\spt S \subset B_r(x)$, and some fixed $c_0$.  Suppose $\del T = [S]$ is an oriented, embedded, multiplicity-one $C^{1,\alpha}$-manifold $S$, and suppose there is a $C^{1,\alpha}$ domain $\Omega$ so that $\spt T \subset \overline{\Omega}$, $S \subset \del\Omega$.  Then
\[
\haus^{n-7}(\sing T) \leq c(n,\alpha) \max\{ c_0^{\frac{7}{2\alpha}}, r_{1,\alpha}(\del\Omega)^{-7}, r_{1,\alpha}(S)^{-7} \} ||T||(\R^{n+1}) .
\]
The same proof works, using \cite{duzaar-steffan}, \cite{bombieri:almost-minz} in place of \cite{hardt-simon:boundary}, \cite{allard:first-variation}, and a minor modification of \cite{naber-valtorta:varifold}.
\end{remark}

The following examples illustrates some of the problems in extending our proof of Theorem \ref{thm:main} to more general settings.
\begin{example}\label{examples}
Both the half-helicoid and half of Enneper's surface (\cite{white:enneper}, \cite{perez:enneper}) are area-minimizing $2$-dimensional currents in $\R^3$.  (For the half-helicoid, just observe that by rotating the half-helicoid about the $z$-axis, one obtains a smooth foliation of $\R^3 \setminus \text{z-axis}$ by oriented minimal surfaces). It would be interesting to know if there exists an example of a singular minimizing hypersurface bounding a multiplicity-one line.

The half-helicoid structure could be seen locally for finite $S$, if one does not assume a priori area bounds on $S$.  For example, one can imagine a connected boundary curve $S$, which is composed of line segment $L$, and a curve that wraps around $L$ many times.  By taking the wrapping curve to go further and further out, one can arrange $S$ to satisfy $r_{1,\alpha}(S) \geq 1$, but take the separation along $L$ of the wrappings to zero.  The minimizing integral current $T$ spanning $S$ will look very much like a compressed half-helicoid near the line segment.  We cannot decompose this $T$ near $L$ into pieces of uniformly bounded area.

\end{example}

I thank Otis Chodosh for several illuminating conversations, and pointing out the half-helicoid is area-minimizing.  This work was supported in part by NSF grant DMS-1606492.



\subsection{Proof of Theorem \ref{thm:main}}

The following quantifies Hardt-Simon's boundary regularity.
\begin{lemma}\label{lem:quant-hs}
There is a constant $\eps_1(n, \alpha)$, so that the following holds.  Let $T$ and $S$ be as in Theorem \ref{thm:main}.  Then for all $x \in \sing T$, we have 
\begin{gather}
\inf_{y \in S} \frac{|x - y|}{r_{1,\alpha}(S, y)} \geq \eps_1(n, \alpha).
\end{gather}
\end{lemma}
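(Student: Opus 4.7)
The plan is a standard blow-up/compactness argument that reduces to the qualitative Hardt--Simon boundary regularity theorem \cite{hardt-simon:boundary}. Suppose for contradiction that no such $\eps_1 > 0$ exists: there are sequences $T_i, S_i$ (satisfying the hypotheses of Theorem~\ref{thm:main}), $x_i \in \sing T_i$, and $y_i \in S_i$ with $|x_i - y_i|/r_{1,\alpha}(S_i, y_i) \to 0$. Setting $r_i := r_{1,\alpha}(S_i, y_i)$, I rescale and translate: $\tilde T_i := (\eta_{1/r_i})_\#(\tau_{-y_i})_\# T_i$ is area-minimizing with boundary $\tilde S_i := (S_i - y_i)/r_i$, and by the very definition of $r_{1,\alpha}$, inside $B_1(0)$ this $\tilde S_i$ is the graph of a $C^{1,\alpha}$ function $u_i$ with $|u_i|_{1,\alpha} \leq 1$ and $u_i(0) = 0$. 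The rescaled singular points $\tilde x_i := (x_i - y_i)/r_i \in \sing \tilde T_i$ tend to $0$.

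Next I would establish uniform local area bounds $\|\tilde T_i\|(B_1(0)) \leq C(n)$. Each $\tilde T_i$ is contained in the rescaled convex hull of $\tilde S_i$, and a standard comparison with the cone from $0$ over a suitable slice, together with $\haus^{n-1}(\tilde S_i \cap B_1) \leq C(n)$ from the graph bound, yields this. With these bounds in hand, I extract a subsequence so that $u_i \to u_\infty$ in $C^{1,\beta}$ for $\beta < \alpha$, and $\tilde T_i \to \tilde T$ in the flat and varifold topologies inside $B_1$. The limit $\tilde T$ is area-minimizing, with $\del \tilde T \llcorner B_1 = [\tilde S_\infty \cap B_1]$ multiplicity-one, where $\tilde S_\infty$ is the $C^{1,\alpha}$ graph of $u_\infty$ through $0$.

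Now Hardt--Simon \cite{hardt-simon:boundary} applies to $\tilde T$: there is $\delta > 0$ so that $\sing \tilde T \cap B_{2\delta}(0) = \emptyset$, and $\tilde T \llcorner B_{2\delta}$ decomposes as a finite signed sum of $C^{1,\alpha}$ sheets each with boundary $\tilde S_\infty \cap B_{2\delta}$. The boundary version of Allard's $\eps$-regularity theorem (applied sheet-by-sheet, using that varifold convergence preserves density and the $C^{1,\beta}$ boundary convergence) then propagates this regularity: for $i$ large, $\tilde T_i$ is regular throughout $B_\delta(0)$, contradicting $\tilde x_i \in \sing \tilde T_i$ with $\tilde x_i \to 0$.

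The main obstacle I expect is this final propagation step: verifying that the convergence is strong enough near the curved boundary to run the boundary $\eps$-regularity argument, particularly when the limit $\tilde T$ has multiplicity greater than one at $0$. Separating the sheets of $\tilde T_i$ for large $i$ and matching each to a sheet of $\tilde T$ requires some bookkeeping, but the Hardt--Simon graphical description of $\tilde T$ near $\tilde S_\infty$ reduces it to a quantitative tracking exercise.
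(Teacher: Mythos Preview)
Your overall strategy (compactness, apply Hardt--Simon to the limit, propagate regularity back via boundary Allard) is natural, but the step ``uniform local area bounds $\|\tilde T_i\|(B_1(0)) \leq C(n)$'' is a genuine gap, and your justification for it is circular.  A cone-over-the-slice comparison gives
\[
\|\tilde T_i\|(B_r) \leq \tfrac{r}{n}\,\bM(\langle \tilde T_i, |\cdot|, r\rangle) + C(n)r^n \leq \tfrac{2}{n}\,\|\tilde T_i\|(B_R) + C(n)R^n
\]
for a good $r \in (R/2, R)$, which is a decay inequality, not an absolute bound: it still requires control of $\|\tilde T_i\|(B_1)$ to start.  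Containment in the convex hull of $\tilde S_i$ does not help either, since $\tilde S_i$ may extend far outside $B_1$ and other pieces of $\tilde T_i$ (with boundary far away) can pass through $B_1$.  Boundary monotonicity goes the wrong way.  The paper's own Example (the half-helicoid) is precisely about this phenomenon in the non-convex setting; in the convex setting nothing in your sketch rules it out.

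The paper avoids this issue entirely by a different mechanism.  After normalizing $r_{1,\alpha}(S_i,y_i) = 2$, it caps off the boundary with an explicit $R_i$ and decomposes $T_i - R_i = \sum_j \partial[E_{i,j}]$ into nested sets of finite perimeter.  The half-space containment $\spt T_i \subset \{x_1 \leq 0\}$ (from convexity) forces all but \emph{one} of the $\partial[E_{i,j}]$ to lie in the closed half-space, and each individual piece has a clean mass bound $\|\partial[E_{i,j}]\|(B_r) \leq c(n)r^n$ by comparison with balls.  The singular point $x_i$ lies on \emph{one} such piece, and the paper blows up only that piece, at scale $|x_i|^{-1}$ (not $r_i^{-1}$).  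The limit is then a minimizer in all of $\R^{n+1}$ with Euclidean growth, supported in a half-space, with boundary either a straight line or empty; a tangent-cone-at-infinity argument (together with \cite{hardt-simon:boundary} for the line case) forces the limit to be a half-plane or plane, contradicting the persistence of the singularity at distance $1$.  So the paper never needs a mass bound on $\tilde T_i$ itself, never takes a limit with curved boundary, and never needs the boundary-Allard propagation step you flag as the main obstacle.
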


\begin{proof}
Towards a contradiction, suppose we have minimizing currents $T_i$, with boundary curves $S_i$, each contained the boundary of the convex set $\Omega_i$, and $x_i \in \sing T_i$, and $y_i \in S_i$, so that
\[
\dist(x_i, S_i) \leq r_{1,\alpha}(S, y_i)/i.
\]
By the maximum principle, $\spt T_i \subset \overline{\Omega_i}$, and by \cite{hardt-simon:boundary}, $\dist(x_i, S_i) > 0$.  Since $r_{1,\alpha}(S, y) \leq r_{1,\alpha}(S, y_i)/2$ for $y \in B_{r_{1,\alpha}(S, y)/2}(y)$, there is no loss in generality in assuming that $y_i$ realizes the distance in $S$ to $x_i$.

After a rotation, dilation, translation, we can assume $y_i = 0$, $r_{1,\alpha}(S_i, y_i) = 2$, and $e_1$ is a choice of vector so that $\Omega_i \subset \{ x : x \cdot e_1 < 0 \}$.  Moreover, we can take $S_i \cap B_1$ to be the graph of a function $u_i$, define on the line $L = \{ x_1 = x_{n+1} = 0 \}$, with $|u_i|_{1,\alpha} \leq 1$.  Notice that $\dom(u_i) \supset B_{1/2} \cap L$, and that $x_i \to 0$.  Let us assume $x_i \in B_{1/2}$ for all $i$.

Let $h(t, x)  : [-1, 1] \times (L \cap B_{1/2}) \to \R^{n+1}$ be defined as
\[
h(t, x) = \left\{ \begin{array}{l l} x + t u_i(x) & t \geq 0 \\ x - t \sqrt{1-|x|^2} e_1 & t \leq 0 \end{array} \right. , 
\]
and let $R_i = (h_i)_\sharp([-1, 1] \times [L \cap B_{1/2}])$.  Then, as an element of $\cI_n(B_{1/2})$, $\del R_i = [S_i] \llcorner B_{1/2}$.  In particular, $T_i - R_i \in \cI_n(B_{1/2})$ has no boundary.  By standard decomposition of codimension-one currents \cite{simon:gmt}, we can find open sets $E_{i,j} \subset E_{i, j+1} \subset \ldots B_{1/2}$, so that $[E_{i,j}] \in \cI_n(B_{1/2})$ satisfies:
\[
T_i - R_i = \sum_j \del [E_{i,j}], \quad ||T_i - R_i|| = \sum_i ||\del [E_{i,j}]||.
\]

Since $(T_i - R_i) \llcorner \Omega^c = (-R_i) \llcorner \Omega^c$, and the $E_{i,j}$ are nested, we have $\spt \del[E_{i,j}] \subset \overline{\Omega}$ for all but one $j = j_i$.  Therefore, we have
\[
||T_i|| = ||\del [E_{i,j_i}] + R_i|| + \sum_{j \neq j_i} ||\del [E_{i,j}]||,
\]
and hence $\del [E_{i,j_i}] + R_i$ and each $\del[E_{i,j}]$ ($j\neq j_i$) are area-minimizing.  By volume comparison agains balls, and the estimate $||R_i||(B_r) \leq c(n) r^n$, we get that
\[
||\del [E_{i,j}]||(B_r) \leq c(n) r^n \quad \forall r < 1/4.
\]

We break into two cases.  First, assume that $x_i \in \spt(\del[E_{i,j_1}] + R_i)$ for all $i$.  Let $\lambda_i = |x_i|^{-1}$, and consider the dilates
\[
T'_i := \del [(\eta_{\lambda_i})_\sharp E_{i, j_1}] + (\eta_{\lambda_i})_\sharp R_i .
\]
So that $T'_i$ has a singularity at distance $1$ from $\lambda_i S_i$.

We can pass to a subsequence (also denoted $i$), so that $(\eta_{\lambda_i})_\sharp [E_{i, j_1}] \to [E]$, for some open set $E$.  Since $\lambda_i S_i \to L$ in $C^{1,\alpha}$, we have $(\eta_{\lambda_i})_\sharp R_i \to [H]$, where $H = \{ x_{n+1} = 0, x_1 > 0 \}$ and $[H]$ is endowed with the orientation so that $\del [H] = L$.

In particular, we have $T'_i \to T = \del [E] + [H]$, where $\del T = [L]$.  Since each $T'_i$ is minimizing, $T$ is minimizing also, and $T_i'$ converge as both currents and measures.  By construction, $T$ has a singularity at distance $1$ from $L$, $T$ has Euclidean volume growth, and $\spt T \subset \{ x : x \cdot e_1 \leq 0\}$.

Since $T$ is minimizing with Euclidean volume growth, we can take a tangent cone $C$ at infinity (as both currents and variflds).  $C$ satisfies $\del [C] = [L]$, and so by \cite{hardt-simon:boundary} $C$ is planar.  Since we can write $C = \del [F] + [H]$ for some open set $F$, and $\spt C \subset \{ x : x \cdot e_1 \leq 0\}$, in fact $C$ must be a multiplicity-one half-plane.  By monotonicity we must have that $T$ is a multiplicity-one half-plane also, and hence $T$ is regular.  This is a contradiction.



We are left with the second case: for all $i$,  $x_i \in \spt \del[E_{i, j}]$ for some $j \neq j_i$.  Write $E_i = E_{i, j}$ for the open set, for which $x_i \in \spt \del E_i$.  Consider the dilates $E'_i = \lambda_i E_i$.  Then we can pass to a subsequence, to get convergence as currents $[E'_i] \to [E]$, convergence as currents and measures $\del [E'_i] \to \del [E]$, for $0 \neq [E] \in \cI_{n+1}(\R^{n+1})$ satisfying: a) $\del [E]$ is minimizing; b) $\del[E]$ has a singularity at distance $1$ from the origin; and c) $E \subset \{ x : x \cdot e_1 \leq 0\}$.

Properties a), c) imply that any tangent cone at infinity of $\del[E]$ is a multiplicity-one plane, and hence $\del[E]$ is a multiplicity-one plane.  This contradicts property b), and therefore completes the proof of Lemma \ref{lem:quant-hs}.
\end{proof}

\begin{lemma}\label{lem:local-bound}
Let $T \in \cI_n(B_1)$ be area-minimizing, with $\del T = 0$.  Then we have
\[
\haus^{n-7}(\sing T \cap B_{1/2}) \leq c(n) ||T||(B_{1}).
\]
\end{lemma}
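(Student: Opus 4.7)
The plan is to derive this from the quantitative stratification results of Naber-Valtorta for codimension-one area-minimizing integral currents, with careful attention to the linear dependence of the constant on the mass.

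Naber-Valtorta prove that for an area-minimizing $T$ in $B_2$ with $\del T \llcorner B_2 = 0$ and $||T||(B_2) \leq \Lambda$, the Minkowski content of $\sing T \cap B_1$ at scale $r$ is bounded by $C(n,\Lambda) r^7$; dividing by $r^7$ and sending $r \to 0$ yields an $\haus^{n-7}$ bound. Their argument proceeds through a Reifenberg-type covering at each dyadic scale, producing a finite collection of balls whose centers lie on the quantitative $(n-7,\eps)$-stratum, together with a sharp packing estimate on the sum of $r^{n-7}$ over the cover. The fact that $\sing T$ is contained in such a stratum for a fixed $\eps=\eps(n)$ follows from the standard $\eps$-regularity for area-minimizers: near any scale of symmetry larger than $(n-7)$, the point is regular. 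The input at each inductive step is a local mass bound, and the output is linear in this input; the density lower bound $\Theta^n(||T||,y)\geq 1$ for $y\in\spt T$ plays the essential role of converting local ball counts into local mass.

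To match the setup of the lemma, I would cover $\overline{B_{1/2}}$ by finitely many balls $B_{1/8}(x_k)$ with $x_k\in B_{1/2}$, chosen so that the enlargements $B_{1/4}(x_k)\subset B_1$ have bounded overlap. On each $B_{1/4}(x_k)$, rescaling to the unit ball and applying the linear-in-mass form of Naber-Valtorta gives
\[
\haus^{n-7}(\sing T \cap B_{1/8}(x_k)) \leq c(n)\, ||T||(B_{1/4}(x_k)).
\]
Summing over $k$ and using bounded overlaps yields the claim.

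The main obstacle is extracting the linear-in-mass dependence from Naber-Valtorta, rather than a merely qualitative $C(n,\Lambda)$. This is implicit in their method but requires tracing through the iterative covering: one must verify that at each scale the mass is properly apportioned among the covering balls, so that the accumulated constant is linear rather than polynomial or exponential in $\Lambda$. Once this is in hand, the reduction to $B_{1/2}\subset B_1$ by covering is routine.
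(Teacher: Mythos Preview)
Your proposal has a genuine gap: the claim that the Naber--Valtorta constant $C(n,\Lambda)$ depends \emph{linearly} on $\Lambda$ is asserted but not proved, and it is not something one can simply read off from their argument. The constants in the dichotomy lemma (quantitative cone-splitting) and in the $\beta$-number estimate come from compactness arguments with mass bound $\Lambda$ as a hypothesis; they are not explicit in $\Lambda$. Moreover, in the inductive covering, the number of ``bad'' scales (where the density drops by a fixed $\eta$) is bounded by $\Lambda/\eta$, and at each bad scale a ball may be re-covered by $\sim \rho^{-(n-7)}$ smaller balls, which a priori gives an exponential-in-$\Lambda$ constant rather than a linear one. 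Your final covering of $B_{1/2}$ by balls $B_{1/8}(x_k)$ does not address this: it merely adjusts radii and adds bounded overlap, while the ``linear-in-mass form of Naber--Valtorta'' applied on each $B_{1/4}(x_k)$ is precisely the unproven statement.

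The paper avoids this entirely by exploiting the codimension-one structure. One decomposes $T = \sum_i \partial[E_i]$ with $||T|| = \sum_i ||\partial[E_i]||$ for nested open sets $E_i$, so that each $\partial[E_i]$ is itself area-minimizing. Comparison with $\partial[E_i \cup B_{3/4}]$ gives a \emph{uniform} mass bound $||\partial[E_i]||(B_{3/4}) \leq c(n)$, so Naber--Valtorta applies to each piece with $\Lambda = c(n)$ fixed, yielding $\haus^{n-7}(\sing \partial[E_i] \cap B_{1/2}) \leq c(n)$. The linearity in $||T||(B_1)$ then comes not from the regularity theory at all, but from \emph{counting} the pieces: monotonicity forces $||\partial[E_i]||(B_1) \geq 1/c(n)$ whenever $\spt\partial[E_i]$ meets $B_{1/2}$, so the number of contributing summands is at most $c(n)||T||(B_1)$. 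Since $\sing T \subset \bigcup_i \sing \partial[E_i]$, summing gives the result. This decomposition-and-count mechanism is the missing idea in your approach.
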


\begin{proof}
We can decompose $T = \sum_i \del [E_i]$, for $E_i \subset E_{i+1} \subset\ldots \subset B_1$, so that $||T|| = \sum_i ||\del [E_i]||$, and hence each $\del [E_i] \in \cI_n(B_1)$ is minimizing in $B_1$.

Since $||\del [E_i]||(B_{1}) \leq ||\del [E_i \cup B_{3/4}]||(B_1)$, we have $||\del [E_i]||(B_{3/4}) \leq c(n)$.  On the other hand, by monotonicity,  if $\spt \partial [E_i] \cap B_{1/2} \neq \emptyset$, then $||\partial [E_i]||(B_{1}) \geq 1/c(n)$.  From the estimates of \cite{naber-valtorta:varifold}, we have
\[
\haus^{n-7}(\sing \partial [E_i] \cap B_{1/2}) \leq c(n) \leq c(n)||\partial [E_i]||(B_{1}).
\]

We can sum up contributions:
\begin{align*}
\haus^{n-7}(\sing T \cap B_{1/2})
&\leq \sum_i \haus^{n-7}(\sing \partial [E_i] \cap B_{1/2}) \\
&\leq c(n) \sum_i ||\partial [E_i]||(B_{1}) \\
&= c(n) ||T||(B_{1}). \qedhere
\end{align*}
\end{proof}

\begin{proof}[Proof of Theorem \ref{thm:main}]
By scaling, there is no loss in assuming $r_{1,\alpha}(S) = 1$.  Lemma \ref{lem:quant-hs} implies that $B_\eps(S) \cap \sing T = \emptyset$, where $\eps = \eps_1(n, \alpha)$.

Let $\{x_j\}_j$ be a maximal $(\eps/2)$-net in $\spt T \setminus B_\eps(S)$.  Then the balls $\{B_{\eps/2}(x_j)\}_j$ cover $\spt T \setminus B_\eps(S)$, and the balls $\{B_{\eps}(x_j)\}_j$ have overlap bounded by $c(n)$.  For each $j$, $\partial T \llcorner B_\eps(x_j) = 0$, and so by Lemma \ref{lem:local-bound} we have
\[
\haus^{n-7}(\sing T \cap B_{\eps/2}(x_j)) \leq \frac{c(n)}{\eps^7} ||T||(B_\eps(x_j)).
\]

Using bounded overlap of the $\{B_{\eps}(x_j)\}_j$, and the isoperimetric inequality due to \cite{almgren:iso}, we deduce that
\begin{align*}
\haus^{n-7}(\sing T) &= \haus^{n-7}(\sing T \setminus B_\eps(S)) \\
&\leq \sum_j \haus^{n-7}(\sing T \cap B_{\eps/2}(x_j)) \\
&\leq c(n, \alpha) \sum_j ||T||(B_\eps(x_j)) \\
&\leq c(n, \alpha) ||T||(\R^{n+1}) \\
&\leq c(n, \alpha) \haus^{n-1}(S)^{n/(n-1)}. \qedhere
\end{align*}
\end{proof}

\subsection{Proof of Theorem \ref{thm:fb}}\label{sec:variants}

We will show that the arguments of \cite{naber-valtorta:varifold}, \cite{gruter:optimal-reg}, and \cite{gruter-jost:allard} prove the following: there is an $\eps = \eps(n)$, so that for $x \in \spt T \cap \del\Omega$, and $r = r_{1,1}(\del\Omega)$, we have
\begin{equation}\label{eqn:fb-b-est}
\haus^{n-7}(\sing T \cap B_{\eps r/2}(x)) \leq c(n) ||T||(B_{\eps r}(x))
\end{equation}
Given this estimate, the bound of Theorem \ref{thm:fb} follows by a straightforward covering argument as in the proof of Theorem \ref{thm:main}.

\vspace{5mm}

By scaling, we can and shall assume that $r_{1,1}(\del\Omega) = 1/\Gamma$, for $\Gamma \leq \eps_2(n)$ chosen sufficiently small so that in $B_{4}(\del\Omega)$ the nearest-point projection $\xi(x)$ to $\del\Omega$ is well-defined and satisfies $|\xi|_{C^1} \leq 1$.  Define the reflection function $\sigma(x) = 2\xi(x) - x$, and the linear reflection $i_x$ about $T_{\xi(x)} \del\Omega$.


Take $T \in \cI_n(B_2)$ area-minimizing with free-boundary in $\Omega$.  Define $T' = T - \sigma_\sharp T$, so that $\del T' = 0$.  Then we can decompose $T'$ as
\[
T' = \sum_i \del[E_i], \quad ||T'|| = \sum_i ||\del [E_i]||,
\]
for nested open sets $E_i \subset E_{i+1}$.  Moreover, since $T' \llcorner \Omega = T$ we can write
\begin{gather}\label{eqn:fb-inside-decomp}
T = \sum_i \del[E_i] \llcorner \Omega, \quad ||T|| = \sum_i ||\del [E_i] \llcorner\Omega||.
\end{gather}
From \eqref{eqn:fb-inside-decomp}, we get that each $\del [E_i] \llcorner \Omega$ is area-minimizing, with free-boundary in $\Omega$.  By comparison against $\del [E_i \cup B_r(x)]\llcorner \Omega$, we have the a priori mass bounds
\[
||\del[E_i]\llcorner \Omega||(B_r(x)) \leq c(n) r^n \quad \forall B_r(x) \subset\subset B_2.
\]
Additionally, \cite{gruter:optimal-reg} showed $T'$ admits a certain almost-minimizing property, in the following sense:
\begin{gather}\label{eqn:fb-almost-minz}
||T'||(B_r(x)) \leq ||T' + S||(B_r(x)) + c(n) \Gamma r ||T'||(B_r(x)).
\end{gather}
for every $S \in \cI_n(B_2)$ with $\del S = 0$, $\spt S \subset B_r(x)$, and every $B_r(x) \subset B_2$ with $x \in \del\Omega$.

\cite{gruter-jost:allard} define the following monotonicity.  For $x \in B_{1}$, and $r < 1-|x|$, let
\[
\tilde \theta_T(x, r) = r^{-n} ||T||(B_r(x)) + r^{-n} ||T|| (\{ y : |\sigma(y) - x| < r \}) .
\]
Notice that when $\Omega$ is a half-space, then $\tilde \theta_T(x, r) = \theta_{T'}(x, r)$, and in general we have $\tilde \theta_T(x, r) = \theta_T(x, r)$ when $r < \dist(x, \del\Omega)$.  Here $\theta_T(x, r) = r^{-n}||T||(B_r(x))$ for the usual Euclidean density ratio, and $\theta_T(x) = \lim_{r\to 0} \theta_T(x, r)$ whenever it exists.

For $0 < s < r < 1-|x|$, \cite{gruter-jost:allard} prove
\begin{align}\label{eqn:fb-mono}
&\int_{B_r(x) \setminus B_s(x)}  |y - x|^{-n-2} \left( |(y - x)^\perp|^2 + |i_{y}(\tilde y - x)^\perp|^2\right) d||T||(y) \\
&\quad \leq \tilde \theta_T(x, r) - \tilde \theta_T(x, s) + c(n) \Gamma r \tilde\theta_T(n, r). \nonumber
\end{align}
Monotonicity \eqref{eqn:fb-mono} implies that the density $\tilde\theta_T(x) = \lim_{r \to 0} \tilde\theta_T(x, r)$ is a well-defined, upper-semi-continuous function on $B_1$, which is $\geq 1$ on $\spt T$.

The above discussion, and the works of \cite{gruter:optimal-reg}, \cite{gruter-jost:allard}, give:
\begin{lemma}\label{lem:fb-conv}
Let $\Omega_i$ be a sequence of $C^2$ domains, with $r_{1,1}(\del\Omega_i \cap B_2) \to \infty$, and $T_i \in \cI_n(B_2)$ a sequence of area-minimizing currents with free-boundary in $\Omega_i$.  Suppose $T_i \to T$.  Then
\begin{enumerate}
\item $T$ is area-minimizing, with free-boundary in a half-space, and $||T_i|| \to ||T||$.
\item $T'_i \to T'$ as currents and measures, and $\tilde\theta_{T_i}(x, r) \to \theta_{T'}(x, r)$ for all $x \in B_2$, and a.e. $0 < r < 2-|x|$.  Here $T_i' = T_i - (\sigma_i)_\sharp T_i$, where $\sigma_i$ is the reflection function associated to $\Omega_i$.
\item If $x_i \to x \in B_2$, and $r_i \to 0$, then $\limsup_i \tilde\theta_{T_i}(x_i, r_i) \leq \tilde\theta_T(x) = \theta_{T'}(x)$.
\item If $T'$ is regular, then the $T_i' \llcorner B_1$ are regular for $i$ sufficiently large.
\end{enumerate}
\end{lemma}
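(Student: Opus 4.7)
The plan is to exploit $r_{1,1}(\del\Omega_i) \to \infty$ (equivalently $\Gamma_i \to 0$) to reduce everything to convergence toward a fixed half-space.  First, after passing to a subsequence and applying rigid motions, $\del\Omega_i \cap B_2$ converges in $C^{1,1}_{loc}$ to an affine hyperplane $P$ bounding a half-space $H$; correspondingly the projections $\xi_i$ and reflections $\sigma_i$ converge in $C^1_{loc}$ to those associated with $P$.  I would fix a $C^1_{loc}$-small diffeomorphism $\Phi_i \colon \overline H \to \overline{\Omega_i}$ (obtained from the usual tubular-neighborhood construction using $\xi_i$) with $\Phi_i \to \mathrm{id}$ in $C^1_{loc}$ and $\Phi_i(P) = \del\Omega_i$.

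Next I would prove (1).  Given any admissible free-boundary competitor $S \in \cI_n(B_2)$ for $T$ in $\overline H$, with $\spt S \subset W \subset\subset B_2$ and $\spt \del S \subset \del H$, the pushforward $(\Phi_i)_\sharp S$ is admissible for $T_i$ and satisfies $\|(\Phi_i)_\sharp S\|(W) \to \|S\|(W)$.  The minimizing property of $T_i$ combined with mass lower semi-continuity then shows both that $T$ is area-minimizing with free-boundary in $H$, and that $\|T_i\|(W) \to \|T\|(W)$ on every open $W \subset\subset B_2$ with $\|T\|(\del W) = 0$.  I expect this to be the main technical obstacle, since one must arrange the transport of competitors through $\Phi_i$ with vanishing mass error while carefully respecting the moving boundary decomposition.

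The remaining assertions then follow.  For (2), $C^1_{loc}$-convergence $\sigma_i \to \sigma$ together with the mass convergence just established gives $(\sigma_i)_\sharp T_i \to \sigma_\sharp T$ as currents and measures, hence $T_i' \to T'$ in the same sense; the identity $\tilde\theta_{T_i}(x, r) \to \theta_{T'}(x, r)$ then holds whenever $\|T\|$ does not charge $\del B_r(x)$ nor the boundary of the $\sigma$-reflected ball, which is the case for a.e.\ $r$.  For (3), I would invoke the almost-monotonicity \eqref{eqn:fb-mono}: for any $x_i \to x$, $r_i \to 0$, and small good $R$ at which (2) applies,
\[
\tilde\theta_{T_i}(x_i, r_i) \leq (1 + c \Gamma_i R)\tilde\theta_{T_i}(x_i, R);
\]
taking $i \to \infty$ with $\Gamma_i \to 0$, and then $R \to 0$ along good radii, gives the desired upper semi-continuity.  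Finally, for (4), if $T'$ is regular on $B_1$ then $\theta_{T'} \equiv 1$ on $\spt T' \cap B_1$, so by (2)--(3) the densities $\tilde\theta_{T_i}(x_i, r)$ are close to $1$ at all nearby scales; combined with the almost-minimizing inequality \eqref{eqn:fb-almost-minz} and $\Gamma_i \to 0$, the free-boundary $\eps$-regularity theorem of \cite{gruter-jost:allard} (together with standard Allard at interior points) upgrades this to a graphical representation of $T_i'$ over $T'$, hence regularity.
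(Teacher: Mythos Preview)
Your argument is broadly correct and parallels the paper's, but there are two points where the paper proceeds differently, one of which exposes a small gap in your proposal.

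\textbf{Part (1).}  The paper reverses your order: rather than transporting free-boundary competitors through diffeomorphisms $\Phi_i$, it first observes that $\sigma_i \to \sigma$ in $C^1$ gives $T_i' \to T'$ as currents, and then uses the almost-minimizing property \eqref{eqn:fb-almost-minz} of the \emph{boundaryless} reflected currents $T_i'$ to obtain $\|T_i'\| \to \|T'\|$ by the standard argument.  Mass convergence $\|T_i\| \to \|T\|$ then follows from $T_i' \llcorner \overline{\Omega_i} = T_i$.  This sidesteps entirely the moving-boundary competitor transport that you flag as the main technical obstacle; in effect, Gr\"uter's reflection has already absorbed that difficulty into \eqref{eqn:fb-almost-minz}.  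Your route also works, but is less economical.

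\textbf{Part (4).}  Your claim ``$\theta_{T'} \equiv 1$ on $\spt T' \cap B_1$'' does not follow from regularity of $T'$: in this paper, $\reg T'$ is the set where $\spt T'$ is locally a union of embedded $C^{1,\alpha}$ manifolds, so the multiplicity may be any positive integer.  Allard-type regularity (interior or Gr\"uter--Jost) requires density close to $1$, not close to $k$.  The paper closes this gap by invoking the decomposition \eqref{eqn:fb-inside-decomp} into multiplicity-one pieces $\del[E_{i,j}]\llcorner\Omega_i$, each of which is itself free-boundary minimizing; regularity of $T'$ forces the limiting sheets to be smooth, and Gr\"uter--Jost then applies sheet by sheet.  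You need to insert either this decomposition step or an explicit sheeting argument before appealing to $\eps$-regularity.
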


\begin{proof}
Since $\sigma_i \to \sigma$ in $C^{1}$, we have $T_i' \to T'$.  The convergence of measures $||T_i'|| \to ||T'||$ is a standard argument using the almost-minimizing property \eqref{eqn:fb-mono}.  Convergence $||T_i|| \to ||T||$ then follows from the fact that $T_i' \llcorner \overline{\Omega_i} = T_i$.

Convergence of the $\tilde\theta_T$ follows because we can estimate
\begin{align*}
&\Big| ||T_i||(\{ y : |\sigma_i(y) - x| < r \}) - ||T_i||(\sigma(B_r(x)) \Big| \\
&\quad \leq c ||T_i||(B_{(1+\kappa_i)r}(x) \setminus B_{(1-\kappa_i)r}(x)),
\end{align*}
where $\kappa_i \to 0$ as $i \to \infty$, and because $||T||(\del B_r(x)) = 0$ for a.e. $r$.  Upper-semi-continuity follows by convergence of $\tilde\theta_T(x, r)$, and monotonicity.

The last property (4) is a direct consquence of the decomposition \eqref{eqn:fb-inside-decomp} and the Allard-type regularity theory of \cite{gruter-jost:allard}.
\end{proof}

\vspace{5mm}

We show the following variant of \cite{naber-valtorta:varifold} (recall that $r_{1,1}(\del\Omega) = 1/\Gamma$).
\begin{theorem}[compare from \cite{naber-valtorta:varifold}]\label{thm:fb-nv}
There is an $\eps_3 = \eps_3(n, \Lambda)$, so that if $T \in \cI_n(B_2)$ is area-minimizing, with free-boundary in $\Omega$, and $||T|| \leq \Lambda$, and $\Gamma \leq \eps_3$, then
\[
\haus^{n-7}(\sing T \cap B_1) \leq c(n, \Lambda).
\]
\end{theorem}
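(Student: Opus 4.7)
The plan is to run the Naber--Valtorta quantitative stratification scheme of \cite{naber-valtorta:varifold} on the doubled current $T' = T - \sigma_\sharp T$. Because $\del T' \llcorner B_2 = 0$, and both the almost-minimizing defect \eqref{eqn:fb-almost-minz} and the monotonicity defect \eqref{eqn:fb-mono} are of order $\Gamma r$, choosing $\Gamma \leq \eps_3(n,\Lambda)$ sufficiently small will let us absorb every error term and essentially run the interior argument on $T'$.

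First I would set up the quantitative stratification using the reflected density $\tilde\theta_T$ in place of the usual one. Call $x \in \spt T$ an $(k,\eps,r)$-symmetric point if $T' \llcorner B_r(x)$ is close in flat norm to a cone invariant under translations by some $k$-plane (required, at a boundary point $x$, to be compatible with the reflection $\sigma$, i.e.\ to lie in $T_{\xi(x)}\del\Omega$ or contain its normal). The monotonicity \eqref{eqn:fb-mono} and Lemma \ref{lem:fb-conv} provide the compactness needed for the standard cone-splitting argument: density pinching $\tilde\theta_T(x,r) - \tilde\theta_T(x,\eps r) \leq \eps$ together with $k$ independent almost-symmetries forces a $(k, C\eps, r)$-symmetric model. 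I would then prove that $(n-6, \eps, r)$-symmetry with $\eps$ small forces $T$ to be regular at scale $r$: by Lemma \ref{lem:fb-conv} any contradicting sequence limits to an area-minimizer with free-boundary in a half-space which is translation invariant in $n-6$ directions, and doubling produces an $(n-6)$-translation-invariant minimizing cone in $\R^{n+1}$, hence a multiplicity-one hyperplane by Federer dimension reduction. Property (4) of Lemma \ref{lem:fb-conv} then upgrades flatness to smoothness of $T$, so $\sing T \cap B_1$ is contained in the quantitative $(n-7)$-stratum $\cS^{n-7}_\eps$.

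Next I would run the Reifenberg-type covering scheme of \cite{naber-valtorta:varifold} on $\cS^{n-7}_\eps$. The central $L^2$ best-plane inequality, bounding $\inf_V \int_{B_r(x)} \dist(y,V)^2\, d\mu(y)$ by the total density drop integrated against $\mu$, transfers to the free-boundary setting with essentially no change, using \emph{both} terms in the integrand of \eqref{eqn:fb-mono}: the interior term $|(y-x)^\perp|^2$ controls the distance of $\spt T$ from the best plane, while the reflected term $|i_y(\tilde y - x)^\perp|^2$ controls the distance of $\sigma_\sharp T$, and together they control $T'$. The monotonicity defect integrates to at most $c(n)\Gamma\Lambda$, absorbed once $\Gamma \leq \eps_3(n,\Lambda)$. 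The discrete Reifenberg, covering lemma, and tree-construction steps then transfer verbatim.

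The main obstacle will be carrying out the boundary cone-splitting cleanly: controlling configurations in which the candidate axis of symmetry is oblique to $\del\Omega$, which can appear during the covering at intermediate scales. The resolution uses the fact that translation-invariance of $T'$ along an axis requires compatibility with $\sigma$, and Lemma \ref{lem:fb-conv}(3) stabilizes this dichotomy under the relevant blow-up limits once $\Gamma$ is small. Assembling the quantitative $\eps$-regularity at $(n-6, \eps)$-symmetric points with the Reifenberg covering then yields $\haus^{n-7}(\sing T \cap B_1) \leq c(n,\Lambda)$ as claimed.
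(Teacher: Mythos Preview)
Your proposal is correct and follows essentially the same route as the paper. The paper isolates precisely the two ingredients you describe---a dichotomy/$\eps$-regularity statement (Theorem~\ref{thm:nv-dich}) to the effect that $(n-6)$-symmetry of the limit $T'$ forces regularity, and an $L^2$ best-plane bound (Theorem~\ref{thm:nv-beta}) coming from the reflected monotonicity \eqref{eqn:fb-mono}---and then notes that the remaining Naber--Valtorta covering/Reifenberg machinery is entirely general once these are in hand; the only cosmetic difference is that the paper phrases the dichotomy directly in terms of $\tilde\theta_T$-pinching rather than your $(k,\eps,r)$-symmetric strata, which sidesteps the ``oblique axis'' bookkeeping you flag.
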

When $T = \del[E]$, we get $\Lambda = c(n)$, and then using the decomposition \eqref{eqn:fb-inside-decomp} in an identical argument to Lemma \ref{lem:local-bound}, we deduce the required \eqref{eqn:fb-b-est}.

The argument of \cite{naber-valtorta:varifold} requires only the monotonicity formula \eqref{eqn:fb-mono}, and the following two theorems, which are essentially Lemmas 7.2, 7.3 and Theorem 6.1 in \cite{naber-valtorta:varifold} (or Lemma 3.1, Theorem 5.1 in \cite{me-engelstein:fb}).  The rest of \cite{naber-valtorta:varifold} is entirely general (see e.g. \cite{me:general-nv}).

\begin{theorem}\label{thm:nv-dich}
There is an $\eta_0 = \eta_0(n, \alpha, \Lambda, \gamma, \rho)$, so that the following holds.  Take $B_{6r}(x) \subset B_2$. Let $T \in \cI_n(B_{6r}(x))$ be an area-minimizer with free-boundary in $\Omega$, and take $\eta \leq \eta_0$.  Suppose
\[
\tilde\theta_T(x, 6r) \leq \Lambda, \quad \Gamma \leq \eta, \quad \sup_{B_{3r}(x)} \tilde\theta_T(z, 3r) \leq E,
\]
then at least one of the following occurs:
\begin{enumerate}
\item we have
\[
\sing T \cap B_r(x) \subset \{ z \in B_r(x) : \tilde\theta_T(z, \gamma \rho r) \geq E - \gamma \}, \quad \text{or}
\]
\item there is an affine $(n-8)$-space $p + L^{n-8}$, so that 
\[
\{ z \in B_r(x) : \tilde\theta_T(z, 3\eta r) \geq E - \eta \} \subset B_{\rho r}(p + L) . 
\]
\end{enumerate}
\end{theorem}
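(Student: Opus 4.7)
The plan is the usual Naber-Valtorta blow-up argument, adapted to the free-boundary setting via Lemma \ref{lem:fb-conv}: as $\Gamma \to 0$ the barrier flattens, the reflected current $T'$ becomes an interior minimizer, and the almost-monotonicity \eqref{eqn:fb-mono} becomes exact. Assume the theorem fails, so that for fixed $\Lambda, \gamma, \rho$ one may find sequences $\eta_i, \Gamma_i \to 0$, domains $\Omega_i$ with $r_{1,1}(\del\Omega_i) = 1/\Gamma_i$, balls $B_{6r_i}(x_i) \subset B_2$, and minimizers $T_i$ with free-boundary in $\Omega_i$ satisfying $\tilde\theta_{T_i}(x_i, 6r_i) \leq \Lambda$ and $\sup_{B_{3r_i}(x_i)} \tilde\theta_{T_i}(\cdot, 3r_i) \leq E$, yet violating both (1) and (2). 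Translating and rescaling, I may take $x_i = 0$, $r_i = 1$. The failure of (1) supplies $z_i^* \in \sing T_i \cap B_1$ with $\tilde\theta_{T_i}(z_i^*, \gamma\rho) < E - \gamma$; iterative selection applied to the failure of (2) supplies $n-6$ affinely spread points $q_{i,0},\ldots,q_{i,n-7} \in B_1$ with $\tilde\theta_{T_i}(q_{i,k}, 3\eta_i) \geq E - \eta_i$ and $\dist(q_{i,k+1}, \mathrm{aff}(q_{i,0},\ldots,q_{i,k})) \geq c(n)\rho$.

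Next I pass to a subsequential limit $T_i \to T_\infty$ via Lemma \ref{lem:fb-conv}. Since $\Gamma_i \to 0$, the reflected currents $T_i'$ converge as currents and measures to an area-minimizer $T_\infty'$ in $\R^{n+1}$ with $\del T_\infty' = 0$, obeying the classical (error-free) monotonicity. The $q_{i,k} \to q_k \in \overline{B_1}$ with spread preserved, so $V := \mathrm{aff}(q_0,\ldots,q_{n-7})$ is a genuine $(n-7)$-affine-subspace. Combining parts (2)--(3) of Lemma \ref{lem:fb-conv}, the hypothesis $\tilde\theta_{T_i}(\cdot, 3) \leq E$, and the vanishing of the error term in \eqref{eqn:fb-mono}, I extract $\theta_{T_\infty'}(q_k) = E$ exactly. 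Equality in the classical monotonicity forces $T_\infty'$ to be a cone from each $q_k$; cone splitting then gives
\[
T_\infty' = [V] \times C
\]
for some area-minimizing hypersurface cone $C$ in $\R^8$. Since minimizing hypersurface cones in $\R^8$ have $\sing C \subseteq \{0\}$ by Federer dimension reduction, I conclude $\sing T_\infty' \subseteq V$ and $\theta_{T_\infty'} \equiv E$ on $V$.

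Finally I derive the contradiction from the singular point. Let $z^* \in \overline{B_1}$ be a subsequential limit of the $z_i^*$. If $T_\infty'$ were regular at $z^*$, Lemma \ref{lem:fb-conv}(4) together with the decomposition \eqref{eqn:fb-inside-decomp} would give regularity (interior or free-boundary, via Gr\"uter-Jost) of $T_i$ in a neighborhood of $z^*$ for large $i$, contradicting $z_i^* \in \sing T_i$. Hence $z^* \in \sing T_\infty' \subseteq V$, so $\theta_{T_\infty'}(z^*) = E$. On the other hand, comparing ball densities via $B_{\gamma\rho - \kappa}(z^*) \subset B_{\gamma\rho}(z_i^*)$ for $i$ large and applying Lemma \ref{lem:fb-conv}(2) at generic radii yields $\theta_{T_\infty'}(z^*) \leq E - \gamma$, the contradiction.

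The overall architecture is standard, so the hard part is the bookkeeping around the almost-monotonicity error $c\Gamma r \tilde\theta$ in \eqref{eqn:fb-mono}: I need the densities at the $q_k$ to pass to the limit with strict equality (so that exact cones are produced rather than nearly-conical objects), and the limit-passage at $z^*$ must compare a pre-limit density bound at the moving center $z_i^*$ against a limit density value at $z^*$, which is not directly covered by any single clause of Lemma \ref{lem:fb-conv}. Both steps rely on choosing good radii at which parts (2) and (3) of Lemma \ref{lem:fb-conv} apply simultaneously, and on using $\Gamma_i \to 0$ to drive the almost-monotonicity error to zero uniformly on the relevant scales.
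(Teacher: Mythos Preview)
Your proposal is correct and is precisely the standard Naber--Valtorta blow-up dichotomy that the paper invokes by reference; the only cosmetic difference is that the paper packages it as ``two contradiction arguments'' (one producing the $(n-7)$-symmetric limit from the failure of (2), one producing the density/regularity contradiction from the failure of (1)), whereas you run both failures through a single limiting sequence. The key regularity input you use---that an $(n-7)$-invariant minimizing cone has $\sing \subseteq V$ and that smoothness of $T_\infty'$ near $z^*$ forces smoothness of $T_i$ via Gr\"uter--Jost---is exactly the content of the paper's stated consequence of Lemma~\ref{lem:fb-conv} about $(n-6)$-invariant limits.
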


\begin{theorem}\label{thm:nv-beta}
There is a $\delta(n, \alpha, \Lambda)$ so that the following holds.  Take $B_{10r}(x) \subset B_2$.  Let $T \in \cI_n(B_{10 r}(x))$ be an area-minmizer with free-boundary in $\Omega$, and $\mu$ a finite Borel measure.  Suppose that
\[
\tilde\theta_T(x, 10r) \leq \Lambda, \quad \Gamma \leq \delta, \quad \tilde\theta_T(x, 8r) - \tilde\theta_T(x, \delta r) < \delta, \quad x \in \sing(T) .
\]
Then we have
\begin{align*}
&\inf_{p + L^{n-7}} \frac{1}{r^{n-5}}\int_{B_r(x)} \dist(z, p + L)^2 d\mu(z) \\
&\quad \leq \frac{c(n, \alpha, \Lambda)}{r^{n-7}} \int_{B_r(x)} \tilde\theta_T(z, 8r) - \tilde\theta_T(z, r) + c(n)\Gamma r d||T||(z),
\end{align*}
where the infimum is over affine $(n-7)$-planes $p + L^{n-7}$.
\end{theorem}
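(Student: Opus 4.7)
The plan is to adapt the second-moment and cone-splitting argument of Theorem 6.1 of \cite{naber-valtorta:varifold} (see also Theorem 5.1 of \cite{me-engelstein:fb}) to the free-boundary setting, with the almost-monotonicity \eqref{eqn:fb-mono} replacing the sharp interior monotonicity and producing the additive $c(n)\Gamma r$ error on the right-hand side.

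The first step is the standard covariance reduction. Letting $\bar z$ be the $\mu$-barycenter on $B_r(x)$ and setting $M = \int_{B_r(x)} (z - \bar z)(z - \bar z)^T d\mu(z)$, if we order the eigenvalues $\lambda_1 \leq \cdots \leq \lambda_{n+1}$ of $M$, then
\[
\inf_{p + L^{n-7}} \int_{B_r(x)} \dist(z, p + L)^2 d\mu(z) \leq \sum_{i=1}^{8} \lambda_i ,
\]
with the infimum attained on the affine plane passing through $\bar z$ spanned by the top $n-7$ eigenvectors. Hence it suffices to bound $v^T M v = \int_{B_r(x)} |(z - \bar z) \cdot v|^2 d\mu(z)$ for each unit $v$ in the span of the eigenvectors corresponding to $\lambda_1, \ldots, \lambda_8$, and to sum these eight contributions.

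The central ingredient is a quantitative cone-splitting inequality bounding $|(z - \bar z) \cdot v|^2$ by a weighted integral of the perpendicular and reflected-perpendicular components of $T$ appearing in \eqref{eqn:fb-mono}. Concretely, one establishes for $z \in B_r(x)$ near $\spt T$ a bound of the schematic form
\[
|(z - \bar z) \cdot v|^2 \leq c(n) r^2 \int_{B_{8r}(z) \setminus B_{\delta r}(z)} \frac{|(y-z)^\perp \cdot v|^2 + |i_y(\tilde y - z)^\perp \cdot v|^2}{|y-z|^{n}} \frac{d||T||(y)}{||T||(B_r(z))} .
\]
Multiplying through by $||T||(B_r(z)) \leq c(n,\Lambda) r^n$, substituting into $v^T M v$, and exchanging the order of integration by Fubini, one recognizes the inner weighted integral (now centered at the varifold point $y$) as bounded by $\tilde\theta_T(y, 8r) - \tilde\theta_T(y, r) + c\Gamma r$ via \eqref{eqn:fb-mono}. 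Summing the eight contributions and relabeling $y \to z$ produces the claimed inequality.

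The principal obstacle is establishing the cone-splitting identity in the free-boundary setting. In the interior case, the inequality uses only the $|(y-z)^\perp|^2$ term and exploits the fact that an $(n-6)$-fold translation-symmetric stationary cone in $\R^{n+1}$ is planar. Here both terms in \eqref{eqn:fb-mono} are needed: the direct perpendicular component $(y-z)^\perp$ controls directions $v$ tangential to $\del\Omega$, while the reflected component $i_y(\tilde y - z)^\perp$ captures the transverse direction, since the reflection $i_y$ swaps these roles for mass lying near the barrier. The hypothesis $\Gamma \leq \delta$ ensures that $\del\Omega$ is sufficiently flat on scale $r$ to couple the two contributions and recover control of $|(y-z) \cdot v|^2$ uniformly in $v \in S^n$, with an additive $\Gamma r$ error that is absorbed. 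Once this reflected cone-splitting is in place, the remainder of the Naber--Valtorta second-moment machinery transfers from the interior case with only notational changes, using the convergence and regularity afforded by Lemma \ref{lem:fb-conv}.
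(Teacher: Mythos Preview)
Your covariance reduction is correct, but the ``quantitative cone-splitting inequality'' you propose as the central ingredient cannot hold as a pointwise bound in $z$ for arbitrary unit $v$ in the bottom-$8$ eigenspace of $M$. Test it against a $T$ that is nearly planar in $B_{8r}(z)$: the right-hand side is nearly zero (since $(y-z)^\perp \approx 0$), while $|(z-\bar z)\cdot v|^2$ is of order $r^2$ for generic $\mu$. The hypothesis $x\in\sing T$ does not rescue a pointwise estimate at other centers $z$. So the Fubini-then-monotonicity step you outline does not close.

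The paper's proof (following Lemma~6.2 and Proposition~6.6 of \cite{naber-valtorta:varifold}) splits into two pieces, and the second is precisely what your sketch omits. First, directly from the eigenvalue identity $\lambda_i v_i = \int (z-\bar z)\big((z-\bar z)\cdot v_i\big)\,d\mu$, Cauchy--Schwarz, and the free-boundary monotonicity \eqref{eqn:fb-mono}, one obtains
\[
\lambda_i \int_{A_r(x)} |\pi_{T^\perp}(v_i)|^2 \,d\|T\| \;\leq\; c\,r^{n+2}\int \big(\tilde\theta_T(\cdot,8r)-\tilde\theta_T(\cdot,r)+c\Gamma r\big),
\]
with the reflected term in \eqref{eqn:fb-mono} absorbed in the same way (it measures the $T$-perpendicular component of radial vectors from the mirror image, not a $\del\Omega$-transverse component as you suggest). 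Second --- and this is the missing idea --- one proves the non-degeneracy lower bound $\int_{A_r(x)} |\pi_{T^\perp}(v_i)|^2\,d\|T\| \geq c(n,\Lambda)\,r^n$ for each $i\leq 8$ by a \emph{contradiction} argument: if it failed along a sequence, Lemma~\ref{lem:fb-conv} produces a limit $T'$ that is a cone (from $\tilde\theta_T(x,8r)-\tilde\theta_T(x,\delta r)<\delta$) invariant under $v_i$; iterating over the orthonormal $v_1,\dots,v_8$ forces the spine to have dimension $\geq n-6$, hence the cone is regular, contradicting $x\in\sing T$. For currents one replaces the varifold first-variation used in \cite{naber-valtorta:varifold} by $\langle \vec T,\omega\llcorner v\rangle = \langle v\wedge\vec T,\omega\rangle$ and $|v\wedge\vec T| = |\pi_{T^\perp}(v)|$. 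You cite the relevant regularity fact and Lemma~\ref{lem:fb-conv}, but you place them inside a direct pointwise inequality rather than in this separate compactness step; without the lower bound there is no way to divide through and isolate $\lambda_i$.
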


\begin{proof}[Proof of Theorem \ref{thm:nv-dich}]
The proof consists of two contradiction arguments, verbatim to Theorem 5.1 in \cite{me-engelstein:fb}.  In place of the $\eps$-strata, we use the following consequence of Lemma \ref{lem:fb-conv}: Suppose $T_i \in \cI_n(B_6)$ is a sequence of area-minimizers with free-boundary in $\Omega_i$, so that $r_{1,1}(\del\Omega_i \cap B_6) \to \infty$ and $T_i \to T$.  If $T' \llcorner B_{2}$ coincides with a cone, that is invariant along an $(n-6)$-space, then $T_i \llcorner B_1$ is regular for sufficiently large $i$.
\end{proof}

\begin{proof}[Proof of Theorem \ref{thm:nv-beta}]
The proof divides into two parts, which are verbatim to Lemma 6.2 and Proposition 6.6 in \cite{naber-valtorta:varifold} (or Theorem 5.1 in \cite{me-engelstein:fb}).  The first part is a direct consequence of the monotonicity formula \eqref{eqn:fb-mono}.  The second part is a straightforward contradiction argument.  The proof in \cite{naber-valtorta:varifold} uses varifold convergence.  For integral currents, one can use the fact for any $(n+1)$-form $\omega$ and any vector $v$, we have
\[
<\vec{T}, \omega \llcorner v> = <v \wedge \vec{T}, \omega> \quad \text{and} \quad |v \wedge \vec{T}| = |\pi_{T^\perp}(v)| . \qedhere
\]
\end{proof}

\bibliographystyle{alpha}
\bibliography{references}

\end{document}